\def\P{\mathbb{P}}
\def\Q{\mathbb{Q}}
\def\E{\mathbb{E}}
\def\R{\mathbb{R}}
\def\N{\mathbb{N}}
\def\R{\mathbb{R}}
\DeclareMathOperator{\arsinh}{\mathrm{arsinh}}
\DeclareMathOperator{\arcosh}{\mathrm{arcosh}}
\DeclareMathOperator{\dilog}{\mathrm{dilog}}
\DeclareMathOperator{\Defi}{\mathrel{\mathop:}=}
\def\h{\mathbf{h}}
\newcommand{\artanh}{\operatorname{artanh}}
\newtheorem{theorem}{Theorem}[section]
\newtheorem{definition}{Definition}[section]
\theoremstyle{remark}
\newtheorem{remark}{Remark}[section]
\newtheorem{example}{Example}[section]
\numberwithin{equation}{section}
\title{\large LARGE DEVIATIONS PRINCIPLE FOR CURIE-WEISS MODELS WITH RANDOM FIELDS}
\author{Matthias L\"owe, Raphael Meiners\thanks{Research supported by German
National Academic Foundation} \,and Felipe Torres\thanks{Corresponding author: ftorrestapia@uni-muenster.de}\thanks{Research supported by DFG through SFB 878 at University of M\"unster}
\\
{\small \it Institute for Mathematical Statistics, University of M\"unster, Germany}}
\begin{document}
\maketitle
\vspace{-12pt}
\abstract{\noindent In this article, we consider an extension of the classical Curie-Weiss model in which the global and deterministic external magnetic field is replaced by local and random external fields which interact with each spin of the system. We prove a Large Deviations Principle for the so-called {\it magnetization per spin} $S_n/n$ with respect to the associated Gibbs measure, where $S_n/n$ is the scaled partial sum of spins. In particular, we obtain an explicit expression for the rate function, which enables an extensive study of the phase diagram in some examples. It is worth mentioning that the model considered in this article covers, in particularly, both the case of i.\,i.\,d.\ random external fields (also known under the name of random field Curie-Weiss models) and the case of dependent random external fields generated by e.\,g.\ Markov chains or dynamical systems.
\\
\\
{\bf Keywords: } Disordered mean-field model, Large Deviations Principle, Random field Curie-Weiss model
\\
\\
{\bf AMS 2010 subject classifications: 60F10, 82B44}
\section{Introduction}
Curie-Weiss models belong to the class of models in statistical mechanics for which one can explicitly explain important physical phenomena such as multiple phases, metastable states and, particularly, how macroscopic ob\-ser\-vables fluctuate around their mean values when close to or at critical temperatures. Ellis and Newman (see \cite{Elli1978b, Elli1978a, Elli1980}) computed the statistics of large spin-block variables in the class of {\it classical} Curie-Weiss models, which correspond to a finite collection of $n$ (spin) random variables with an equal interaction of strength $1/n$ between each pair of spins, in the presence of a constant and deterministic external magnetic field. They derived a Law of Large Numbers and a Central Limit Theorem for $S_n/n$
(where $S_n$ is the total magnetization of the model with $n$ spins), basically showing that $S_n/\sqrt{n}$ has a non-Gaussian limit at a critical temperature $\beta_c$, and a Gaussian limit otherwise. From a probabilistic point of view, a natural next step after showing a Law of Large Numbers and a Central Limit Theorem would be to study large and moderate deviations properties of $S_n/n$. While a Large Deviations Principle (LDP) already goes back to Ellis \cite{Elli1985}, in \cite{Eich2004} Eichelsbacher and L\"owe showed that $S_n/n$ satisfies Moderate Deviations Principles (MDPs) with respect to the associated Gibbs measure.
\medskip
\\
In this paper, we are interested in the behaviour of $S_n/n$ for so-called {\it Curie-Weiss models with random fields}, which are derived from the Curie-Weiss model by replacing the constant external field by local and random external fields which interact with each spin of the system. Noteworthy exponents of these models are the {\it random field Curie-Weiss models} (abbreviated by RFCW models), where the local external fields are given by i.\,i.\,d.\ random variables, making RFCW models one of the easiest disordered mean-field models. These models have been studied intensively over the last decades, see e.\,g.\ \cite{Schn1977,Ahar1978,Sali1985,Amar1991,Amar1992} and the references therein. It maybe worthwhile noting that many of the above authors assume the local external fields to be symmetrically Bernoulli distributed or to be bounded, while we do not need these restricting assumptions to consider RFCW models as representatives of Curie-Weiss models with random fields. In the setting of RFCW models, Amaro de Matos and Perez showed in \cite{Amar1991} a Central Limit Theorem for $S_n/n$, by using techniques similar to the ones employed by Ellis and Newman. In this fashion, they also proved that $S_n/\sqrt{n}$ has a Gaussian or a non-Gaussian limit (depending on temperature), though the main difference with the classical setting is that these Gaussian and non-Gaussian limits depend on the realization of the local random external field. A natural next step, as before, would be to study an LDP and MDPs for $S_n/n$ with respect to the now random Gibbs measure (disorder dependent). Almost sure MDPs have been proved in \cite{Loe2012} and an almost sure LDP can, in principle, be derived for RFCW models with bounded external fields using a result of Comets (Theorem V (ii) in \cite{Come1989}). As hinted at, we can prove an LDP for RFCW models even if the external fields are unbounded. Yet, this is not the only reason why we want to prove an LDP in this article. The rate function obtained by using the result of Comets is extremly abstract and, hence, makes a study of the phase diagram impossible. We prove an LDP with an explicit rate function and make use of this advantage by studying the phase diagram in some examples. It should be mentioned that for certain choices of the random external field the analysis of the phase diagram for the corresponding RFCW model had been already obtained by using techniques related to the study of the dynamics and the study of the metastates for the RFCW model (see e.\,g.\ \cite{Font2000,Bovi2009}) and for more general mean-field models (see e.\,g.\ \cite{Iaco2010,Kuel1997}). Moreover, our approach also yields an LDP if the external magnetic fields are \emph{not} independent.
\medskip
\\
The rest of the paper is organized as follows: In Section \ref{sec1} we mathematically describe the Curie-Weiss models with random fields. Our main result will be stated and proved in Section \ref{sec2}. Finally, Section \ref{sec3} is devoted to applications of our result and their use to study phase diagrams.

\section{Curie-Weiss models with random fields}\label{sec1}
The Curie-Weiss model is a mean-field model of a ferromagnet. Due to its mean-field structure the spatial location of the spins is unimportant. The Hamiltonian of the Curie-Weiss model with external magnetic field $h \in \R$ can, therefore, be described by
\begin{equation}\label{HamiltonianCW}
H_{n,h}^{\text{CW}} (\sigma)~=~ -\frac 1 {2n} \sum_{i,j=1}^n  \sigma_i \sigma_j - h \sum_{i=1}^n \sigma_i ~=~ -\frac 1 {2n} S_n(\sigma)^2 -h S_n(\sigma), \quad \sigma\in\{-1,+1\}^n ,
\end{equation}
where $S_n(\sigma)=\sum_{i=1}^n \sigma_i $ is the total magnetization of the model. In the Curie-Weiss model with random fields the global external magnetic field $h$ is now replaced by local and random external magnetic fields $(\h_i, i \in \N)$, defined on some probability space $(\Omega, \mathcal{F}, P)$, which satisfy $P$-almost surely
\begin{equation}\label{f}
f_n(x) ~\Defi~ \frac1n \sum_{i=1}^n \ln \cosh (x+\beta \h_i) ~\longrightarrow~ f(x)
\end{equation}
as $n \to \infty$ for every $x \in \Q$ and some differentiable function $f: \R \rightarrow \R$. Its Hamiltonian is, thus, given by
\begin{equation*}
H_n^{\h} (\sigma)~=~ -\frac 1 {2n} \sum_{i,j=1}^n  \sigma_i \sigma_j - \sum_{i=1}^n \h_i\sigma_i,  \qquad \sigma\in\{-1,+1\}^n.
\end{equation*}
Correspondingly, the model can be associated with the following Gibbs measure on $\{-1,+1\}^n$ at inverse temperature $\beta >0$
\begin{equation}\label{gibssm}
P_{n,\beta}^{\h}(\sigma) ~=~  \frac{1}{Z_{n,\beta}^\h}\exp\left(\frac{\beta}{2n} \sum_{i,j=1}^n \sigma_i \sigma_j + \beta\sum_{i=1}^n \h_i \sigma_i\right)
\end{equation}
where
\begin{equation*}
Z_{n,\beta}^{\h} ~=~ \sum _{\sigma \in \{-1,+1\}^n} \exp\left(\frac{\beta}{2n} \sum_{i,j=1}^n \sigma_i \sigma_j + \beta\sum_{i =1}^n \h_i \sigma_i\right)
\end{equation*}
is the partition function of the model.

\noindent We can see that, other than in the ordinary Curie-Weiss model, $S_n$ is not an order parameter for the Curie-Weiss model with random fields, since in expression (\ref{gibssm}) the measure is not completely determined by the value of $S_n$, now that the local fields also plays an important role.

\noindent We want to end this section with three examples, which shall illustrate the diverse nature of Curie-Weiss models with random fields.

\begin{example}[Random field Curie-Weiss models]\label{exam:RFCW}
In this first example, the external fields are assumed to be i.\,i.\,d.\ random variables, that is, $\h$ is distributed according to the $\N$-fold product measure of the marginal distribution of $\h_1$, $\nu$. Assume that $\nu$ has a finite absolute first moment, i.\,e.\,
\begin{equation*}
\int_{h \in \R}|h|\, d\nu(h) ~<~ \infty.
\end{equation*}
Since $|\ln \cosh (x)| \leq |x|$, \eqref{f} is, by the Law of Large Numbers, satisfied with
\begin{equation*}
f(x) ~=~ \int_{h \in \R} \ln \cosh (x + \beta h) \,d\nu(h).
\end{equation*}
\end{example}

\begin{example}[External fields generated by Markov chains]
In this example, the external fields are still assumed to be identically distributed, but the independence assumption is dropped. To be more precise, let $\h$ be an irreducible Markov chain on a countable state space $S$ that has a stationary distribution $\pi$. Moreover, let us assume that $\h$ is in equilibrium, that is, the initial state $\h_1$ is distributed according to $\pi$, and that $\pi$ has a finite absolute first moment.
Then, with the help of the ergodic theorem (cf.\ Example 7.2.2 in \cite{Dur2010}), \eqref{f} is satisfied with
\begin{equation*}
f(x) ~=~ \sum_{h \in S} \ln \cosh (x + \beta h) \, \pi(h).
\end{equation*}
It should be noted that this function does not depend on the transition probabilities of the Markov chain, but only on the stationary distribution $\pi$.
\end{example}

\begin{example}[External fields generated by dynamical systems]
Let $(\Omega, \mathcal{F}, P, \phi)$ be a dynamical system, that is, $(\Omega, \mathcal{F}, P)$ be the underlying probability space and $\phi : \Omega \rightarrow \Omega$ be a measure preserving transformation. Assume that $\phi$ is ergodic and that $X:\Omega\rightarrow \R$ is a $\mathcal{F}$-measurable random variable with finite absolute first moment. By the ergodic theorem (Theorem 7.2.1 in \cite{Dur2010}), \eqref{f} is satisfied for $(\h_i)_{i \in \N} = (X \circ \phi^i)_{i \in \N_0}$ with
\begin{equation*}
f(x) ~=~ \int_{h \in \R} \ln \cosh (x + \beta h)\, dP^X(h).
\end{equation*}
\end{example}

\section{Main result}\label{sec2}
Before we state our result on an LDP for $(S_n/n)_n$, let us recall the basic definition of an LDP. We will just consider the situation where elements of $\mathcal{B}({\R})$, the Borel $\sigma$-field on $\R$, are of interest. For more general definitions see \cite{Demb1998}.

\begin{definition}
Let $(\mathds{P}_n)_{n \in \N}$ be a sequence of probability measures. A sequence of real-valued random variables $(X_n)_{n \in \N}$ is said to satisfy an LDP w.\,r.\,t.\ $(\mathds{P}_n)_{n\in\N}$ with rate function $I : \R \to [0,\infty]$ if
\begin{itemize}
\item $I$ has compact level sets $\{x \in \R: I(x) \leq c\}\subset \R$ for any $c \in \R$,
\item (Upper bound) for every closed set $C \in \mathcal{B}({\R})$ it holds
\begin{equation}\label{upperLDP}
\limsup_{n \to \infty} \frac{1}{n} \ln \mathds{P}_n (X_n \in C) ~\le~ - \inf_{x \in C} I(x),
\end{equation}
\item (Lower bound) for every open set $O \in \mathcal{B}({\R})$ it holds
\begin{equation}\label{lowerLDP}
\liminf_{n\to \infty} \frac{1}{n} \ln \mathds{P}_n (X_n \in O) ~\ge~ - \inf_{x \in O} I(x).
\end{equation}
\end{itemize}
\end{definition}

\noindent Note that in this definition and the rest of this article the infimum of a function over an empty set is interpreted as $\infty$. Also, what is called a rate function in this definition is sometimes referred to as a \emph{good} rate function in literature. Having said this, we can state and prove our main result:

\begin{theorem}\label{LDPRFCW}
The magnetization $(S_n/n)_{n \in \N}$ satisfies $P$-almost surely an LDP w.\,r.\,t. $\P_{n,\beta}^{\h}$ with rate function $I(x) = J(x) -\inf_{y\in\R}J(y)$, where $J(x) = f^{*}(x)-\beta x^2/2$. Therein, $f^*$ denotes the Legendre-Fenchel transform of $f$, that is,
\begin{equation*}
f^*(x) ~\Defi~ \sup_{y\in\R}\{xy-f(y)\}.
\end{equation*}
\end{theorem}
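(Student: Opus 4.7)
The plan is to prove the LDP by a change-of-measure argument coupled with Varadhan's lemma. Introduce the product reference measure $Q_n^{\h}$ on $\{-1,+1\}^n$ defined by $Q_n^{\h}(\sigma) \propto \exp(\beta \sum_i \h_i \sigma_i)$, under which the spins are independent with $Q_n^{\h}(\sigma_i = 1) = \exp(\beta \h_i)/(2\cosh(\beta \h_i))$. Relative to $Q_n^{\h}$, the Gibbs measure has the simple Radon--Nikodym density
\begin{equation*}
\frac{dP_{n,\beta}^{\h}}{dQ_n^{\h}}(\sigma) ~=~ \frac{1}{\hat Z_{n,\beta}^{\h}}\exp\!\bigl(n F(S_n/n)\bigr), \qquad F(x) \Defi \tfrac{\beta}{2} x^2,
\end{equation*}
with $\hat Z_{n,\beta}^{\h} = \E_{Q_n^{\h}}[\exp(nF(S_n/n))]$. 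Thus $P_{n,\beta}^{\h}$ is an exponential tilt of $Q_n^{\h}$ by the continuous function $F$, so once an LDP for $S_n/n$ under $Q_n^{\h}$ is in place, Varadhan's lemma (with its standard Laplace-principle variants for open and closed sets) will transfer it to the Gibbs setting.

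For the intermediate LDP under $Q_n^{\h}$ I would invoke G\"artner--Ellis. A direct computation gives the scaled log-moment generating function
\begin{equation*}
\frac{1}{n}\ln \E_{Q_n^{\h}}\!\bigl[e^{tS_n}\bigr] ~=~ f_n(t) - f_n(0),
\end{equation*}
which by hypothesis \eqref{f} converges $P$-a.s.\ for every $t \in \Q$ to $\Lambda(t) \Defi f(t) - f(0)$. Since each $f_n$ is convex in $t$ (as a mean of the convex functions $x \mapsto \ln\cosh(x+\beta \h_i)$) and the limit is finite, pointwise convergence on the dense set $\Q$ upgrades, on a single event of probability one, to pointwise convergence on all of $\R$ with uniform convergence on compacta. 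The resulting $\Lambda$ is convex, everywhere finite, and differentiable (inherited from the differentiability of $f$), hence essentially smooth. G\"artner--Ellis then delivers $P$-a.s.\ an LDP for $S_n/n$ under $Q_n^{\h}$ with good rate function $\Lambda^*(x) = f^*(x) + f(0)$.

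Because $S_n/n \in [-1,1]$ almost surely, $F$ is bounded on the effective state space, so no tail hypothesis is needed. Writing
\begin{equation*}
P_{n,\beta}^{\h}(S_n/n \in A) ~=~ \frac{\E_{Q_n^{\h}}\!\bigl[\exp(nF(S_n/n))\,\mathbf{1}_{\{S_n/n \in A\}}\bigr]}{\E_{Q_n^{\h}}\!\bigl[\exp(nF(S_n/n))\bigr]}
\end{equation*}
and applying the Laplace-principle upper bound to the numerator for closed $A$, the lower bound for open $A$, and the full Varadhan formula to the denominator, the additive constant $f(0)$ cancels between them and one obtains
\begin{equation*}
\lim_{n \to \infty}\tfrac{1}{n}\ln P_{n,\beta}^{\h}(S_n/n \in A) ~=~ -\inf_{x \in A} J(x) + \inf_{x \in \R} J(x), \qquad J(x) = f^*(x) - \tfrac{\beta}{2}x^2,
\end{equation*}
which is exactly the claimed LDP with rate $I = J - \inf J$. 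Compactness of the level sets of $I$ follows from the lower semicontinuity of $J$ and the fact that $f^*$ is infinite outside the compact interval $[-1,1]$.

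The principal obstacle is ensuring that the G\"artner--Ellis hypotheses hold on a \emph{single} null set valid for all $t$, rather than $t$-dependent null sets; this is precisely where convexity of $f_n$ and the density of $\Q$ are crucial. A secondary point is the verification of essential smoothness of $\Lambda$, which reduces to the differentiability assumption in \eqref{f} together with the observation that the effective domain is all of $\R$, so the steepness condition at the boundary is vacuous. With these two technicalities handled, the remaining pieces (Cram\'er-type identity for the moment generating function, Varadhan's lemma, cancellation of the normalizing constant) are routine.
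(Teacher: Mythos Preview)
Your proposal is correct and follows essentially the same route as the paper: introduce the product reference measure $Q_n^{\h}$, establish an LDP for $S_n/n$ under $Q_n^{\h}$ via G\"artner--Ellis with limiting cumulant generating function $\Lambda(t)=f(t)-f(0)$, and then transfer it to $P_{n,\beta}^{\h}$ by an exponential tilt (the paper cites the ``Tilted LDP'' from den Hollander, which is precisely the Varadhan/Laplace argument you describe). The only minor difference is in how the convergence $f_n\to f$ is upgraded from $\Q$ to all of $\R$: the paper does this via the explicit $1$-Lipschitz property of each $f_n$, whereas you invoke convexity of $f_n$ and the standard fact that pointwise convergence of convex functions on a dense set to a finite continuous limit extends to locally uniform convergence on $\R$; both arguments are valid and equally short.
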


\begin{proof}
With probability one, we can choose a realization $h = (h_i)_{i \in \N}$ of $\h$ that satisfies \eqref{f}. This realization satisfies \eqref{f} not just for $x \in \Q$, but also for any $x \in \R$. Indeed, for $x \in \R$ and every $\varepsilon > 0$ we can choose $y \in \Q$ such that
\begin{eqnarray}
|x-y| &\leq& \frac{\varepsilon}{3} \text{ and } \label{eq:lipschitz}\\
|f(x) - f(y)| &\leq& \frac{\varepsilon}{3}\label{2},
\end{eqnarray}
since $\Q$ is dense in $\R$ and $f$ is a continuous function. Note that for every $n \in \N \ f_n$ is Lipschitz continuous with constant 1, since for $x_1,x_2 \in \R, x_1 \geq x_2$,
\begin{eqnarray*}
&& |f_n(x_1) - f_n(x_2)| -|x_1-x_2| \\
&\leq& \frac{1}{n} \sum_{i=1}^n\left|\ln \left( \frac{\cosh(x_1+\beta h_i)}{\cosh(x_2+\beta h_i)}\right)\right|-x_1+x_2 \\
&=& \frac{1}{n} \sum_{i=1}^n \max\left\{\ln \left(\frac{\cosh(x_1+\beta h_i)}{\cosh(x_2+\beta h_i)}\right),\ln \left(\frac{\cosh(x_2+\beta h_i)}{\cosh(x_1+\beta h_i)}\right)\right\} +\ln e^{-x_1+x_2}\\
&\leq& \frac{1}{n} \sum_{i=1}^n \max\left\{\ln \left(\frac{e^{x_1+x_2+\beta h_i}+e^{x_2-x_1-\beta h_i}}{e^{x_1+x_2+\beta h_i}+e^{x_1-x_2-\beta h_i}}\right), \ln \left(\frac{e^{x_2-x_1+\beta h_i}+e^{-x_1-x_2-\beta h_i}}{e^{x_1-x_2+\beta h_i}+e^{-x_1-x_2-\beta h_i}}\right)\right\}\le 0\\
\end{eqnarray*}
Therefore, \eqref{eq:lipschitz} implies
\begin{equation}\label{3}
|f_n(x)-f_n(y)| ~\leq~ \frac{\varepsilon}{3}.
\end{equation}
Since $y \in \Q$, we have $|f_n(y)-f(y)| \leq \varepsilon /3$ for $n$ sufficiently large by assumption and, consequently,
$|f_n(x) - f(x)| ~\leq~ |f_n(x) - f_n(y)| + |f_n(y) - f(y)| + |f(y)- f(x)| ~\leq~ \varepsilon$,
where we have made use of \eqref{2} and \eqref{3}. Thus, we can assume the validity of \eqref{f} for any $x \in \R$.

\noindent Next, let $Q_i$ for every $i \in \N$ be the probability measure on $\{-1,+1\}$
induced by
\begin{equation*}
Q_i(\{1\}) ~\Defi~ \frac{e^{\beta h_i}}{2 \cosh(\beta h_i)}
\end{equation*}
and denote by $Q$ the product measure $\otimes_{i=1}^\infty Q_i$. As a first step towards an LDP for $S_n/n$ w.\,r.\,t.\ $P_{n, \beta}^h$ we prove an LDP w.\,r.\,t.\ $Q$, which will be done by means of the G\"artner-Ellis Theorem (cf.\ Theorem 2.3.6 in \cite{Demb1998}). To this end, we compute the logarithmic moment generating function $\Lambda$ and observe that for $x \in \R$
\begin{eqnarray*}
\Lambda(x) &\Defi&
\lim_{n \to \infty} \frac1n \ln \E e^{n x S_n/n}\\
&=& \lim_{n \to \infty} \frac1n \sum_{i=1}^n \ln \left(\frac{e^{x + \beta h_i} + e^{-x-\beta h_i}}{2 \cosh(\beta h_i)}\right)\\
&=& \lim_{n \to \infty} \big(f_n(x)-f_n(0)\big)\\
&=& f(x)-f(0),
\end{eqnarray*}
where $\E$ denotes the expectation w.\,r.\,t.\ $Q$. In doing so, we have used that $S_n$ is a sum of independent random variables (under the product measure $Q$!) to derive the second line and that \eqref{f} holds for any $x \in \R$ to get the last line. Since $f$ is differentiable, the same holds for $\Lambda$ and the G\"artner-Ellis Theorem yields that $S_n/n$ satisfies an LDP w.\,r.\,t. $Q$ with rate function $R$ given by $R(x) \Defi f^{*}(x) + f(0)$. Note that this implies $f^*(x) = \infty$ for $|x| > 1$ since $(S_n/n)_{n\in \N}$ is uniformly bounded by $1$.

Next, we see that $P_{n,\beta}^h \circ (S_n/n)^{-1}$ is a tilted version of $Q\circ (S_n/n)^{-1}$, since with
\begin{equation*}
F : \R \rightarrow \R, \quad x ~\mapsto~
\begin{cases}
\beta x^2 / 2,& \text{ if } |x| \leq 1,\\
\beta /2,&\text{ otherwise,}
\end{cases}
\end{equation*}
and $\mathcal{X}_n = \{-1, -1+2/n, \ldots, 1-2/n, 1\}$ we have got for every $S \in \mathcal{B}(\R)$

\begin{eqnarray*}
P_{n,\beta}^h \circ (S_n/n)^{-1}(S)
&=& \sum_{m \in S \cap \mathcal{X}_n} P_{n,\beta}^h (S_n/n = m)\\
&=& \frac{\sum_{m \in S\cap \mathcal{X}_n} \sum_{\sigma: S_n(\sigma)=n m} e^{\frac{\beta}{2n}(\sum_{i=1}^n\sigma_i)^2+\beta \sum_{i=1}^n h_i \sigma_i}}{\sum_{m \in \mathcal{X}_n} \sum_{\sigma: S_n(\sigma) = n m} e^{\frac{\beta}{2n}(\sum_{i=1}^n\sigma_i)^2+\beta \sum_{i=1}^n h_i \sigma_i}}\\
&=& \frac{\sum_{m \in S\cap \mathcal{X}_n} e^{n F(m)} \sum_{\sigma: S_n(\sigma)=nm} \prod_{i=1}^n Q_i(\{\sigma_i\})}{\sum_{m \in S\cap \mathcal{X}_n} e^{n F(m)} \sum_{\sigma: S_n(\sigma)=nm} \prod_{i=1}^n Q_i(\{\sigma_i\})}\\
&=& \frac{\sum_{m \in S\cap \mathcal{X}_n} e^{n F(m)} Q \circ (S_n/n)^{-1}(m)}{\sum_{m \in \mathcal{X}_n} e^{n F(m)} Q \circ (S_n/n)^{-1}(m)}.
\end{eqnarray*}

\noindent The Tilted LDP (cf.\ Theorem III. 17 in \cite{DenH}) then yields that $S_n/n$ satisfies an LDP w.\,r.\,t.\ $P_{n, \beta}^h$ with rate function
\begin{eqnarray*}
I(x)
&=& R(x) - F(x) - \inf_{y\in\R}\{R(y) - F(y)\}\\
&=& f^*(x) - F(x) - \inf_{y\in\R}\{f^*(y) - F(y)\}\\
&=& f^*(x) - \beta x^2/2 - \inf_{y\in\R}\{f^*(y) - \beta y^2/2\},
\end{eqnarray*}
where we have used $f^*(x) = \infty$ for $|x| > 1$ to obtain the last line, which, in turn, is the desired form of the rate function.
\end{proof}

\begin{remark}
Looking at the previous proof, we see that we carried over the problem to an LDP w.\,r.\,t.\ $Q$ and proved this by means of the G\"artner-Ellis Theorem. One could have, instead, thought about directly using the G\"artner-Ellis Theorem to prove an LDP w.\,r.\,t.\ $P_{n,\beta}^h$ with rate function $I$. However, it is not only way more complicated to calculate the logarithmic moment generating function in that setting since $S_n$ is \emph{not} a sum of independent random variables under the measure $P_{n,\beta}^h$, but also does the logarithmic moment generating function in general not satisfy the assumptions of the G\"artner-Ellis Theorem. This is evident, since the desired rate function $I$ is in general not convex (cf.\ e.\,g.\ Example \ref{example2}) and the G\"artner-Ellis Theorem can only be used to prove LDPs with convex rate functions.
\end{remark}

\begin{remark}\label{remark:minima}
It is well-known from Large Deviations Theory (see e.\,g.\ Theorem II.7.2 in \cite{Elli1985}) that the magnetization per spin $S_n/n$ is asymptotically concentrated around the global minima of the rate function. Thus, it is important to know the precise structure of the rate function. The explicit representation of $I$ given in Theorem \ref{LDPRFCW} enables an accurate study of the global minima of the rate function $I$. Since $f$ is the pointwise limit of convex functions $(f_n)_{n \in \N}$, $f$ itself is convex and, thus, Theorem A.1 in \cite{Cost2005} yields that the global minima of $I$ coincide with the global minima of $G$ given by
\begin{equation}\label{defG}
G(x) ~\Defi~  G_{\beta}(x) ~\Defi~ \frac{\beta}{2}x^2-f(\beta x).
\end{equation}
Therefore, a study of $G$ reveals the phase diagram of Curie-Weiss models with random fields and we identify the following phases of the system:
\begin{itemize}
\item[(i)]{\it Paramagnetic phase}: $G$ has a unique global minimum of type 1.
\item[(ii)]{\it Ferromagnetic phase}: $G$ has two global minima, both of type 1.
\item[(iii)]{\it First-order phase transition}: $G$ has several global minima, all of type 1.
\item[(iv)]{\it Second-order phase transition}: $G$ has a unique global minimum of type 2.
\item[(v)]{\it Tricritical point}: $G$ has a unique global minimum of type 3.
\end{itemize}
In this process we said that a minimum $m$ is of type $k \in \N$ if there is a positive real number $\lambda$ (called strenght) such that
\begin{equation*}
G(x) ~=~ G(m) +\frac{\lambda}{(2k)!} (x-m)^{2k} +\mathcal{O}\big((x-m)^{2k+1}\big) \quad \text{ as } x\rightarrow m.
\end{equation*}
\end{remark}
\section{Examples}\label{sec3}
In this section we want to apply Theorem \ref{LDPRFCW} in some examples and study the corresponding phase diagram as elucidated in Remark \ref{remark:minima}:

\begin{subsection}{Classical Curie-Weiss model}
Assume $\h_i \equiv h$ for all $i \in \N$ and some $h \in \R$. In that case we have
\begin{equation*}
f(x) ~=~ \ln \cosh (x + \beta h)
\end{equation*}
and Theorem \ref{LDPRFCW} yields that the magnetization satiesfies an LDP with rate function
\begin{eqnarray*}
I(x)
&=& \sup_{y \in \R} \left\{x y - f(y)\right\} - \frac{\beta}{2} x^2 - \inf _{y \in \R}\left\{\sup_{z \in \R} \left\{y z - f(z)\right\}-\frac{\beta}{2} y^2\right\}.
\end{eqnarray*}
Using the identity
\begin{equation*}
\ln \cosh \artanh x ~=~ -\frac12 \ln (1-x^2),
\end{equation*}
which holds for all $|x|<1$, one sees that
\begin{equation*}
\sup_{y \in \R} \left\{x y - f(y)\right\} ~=~ I_0(x) - \beta h x,
\end{equation*}
where $I_0$ is given by
\begin{equation*}
I_0(x) ~\Defi~
\begin{cases}
\frac{1+x}{2} \ln(1+x)+\frac{1-x}{2}\ln(1-x)& \text{ if } |x|<1\\
\ln 2& \text{ if }|x| = 1\\
\infty& \text{ if }|x| > 1.
\end{cases}
\end{equation*}
Thus, we get
\begin{equation}\label{RatefunctionCW}
I(x) ~=~ -\frac{\beta}{2} x^2 - \beta h x + I_0(x) - \inf _{y \in \R}\left\{-\frac{\beta}{2} y^2 - \beta h y + I_0(y)\right\},
\end{equation}
which is the well-known representation of the rate function in the setting of the classical Curie Weiss model (see e.\,g.\ (4.17) in \cite{Elli1985}). We omit a discussion of the rate function and the phase diagram as they are well-discussed (see e.\,g.\ \cite{Elli1985}).
\end{subsection}

\begin{subsection}{Curie-Weiss model with dichotomous external fields}\label{example2} 

Assume that $(\h_i)_{i \in \N}$ is an i.\,i.\,d.\ sequence, where $\h_1$ takes the values $h$ and $-h$ with probability $1/2$ each, $h \in \R$. Again, Theorem \ref{LDPRFCW} yields that the magnetization satisfies an LDP with rate function $I$ given by (cf.\ Figure \ref{fig1}):
\begin{itemize}
\item[1.] If $|x| < 1$, then
\begin{eqnarray*}
I(x) &=& \ln 2 -\frac{\beta}{2} x^2 +\frac{x}{2} \arsinh\left(\frac{x}{1-x^2} \left(b+\sqrt{1+x^2 a^2}\right)\right) + \frac12 \ln\left(\frac{1-x^2}{2}\right)\\
&& - \frac12 \ln\big(b+\sqrt{1+x^2 a^2}\big)- \inf_{x \in \R}G(x)
\end{eqnarray*}
with $a \Defi \sinh(2 \beta h), b \Defi \cosh(2 \beta h)$ and (cf.\ \eqref{defG})
\begin{equation*}
G(x) ~=~ \frac{\beta}{2} x^2 - \frac{1}{2}\ln\left(\cosh [\beta (x + h)] \cosh [\beta (x - h)]\right).
\end{equation*}
\item[2.] If $|x| = 1$, then
\begin{equation*}
I(x) ~=~
\ln 2-\frac{\beta}{2} x^2 - \inf_{x \in \R}G(x).
\end{equation*}
\item[3.] If $|x|>1$, then
\begin{equation*}
I(x) ~=~ \infty.
\end{equation*}
\end{itemize}
\begin{figure}[!h]
\begin{center}
\includegraphics[width=12cm]{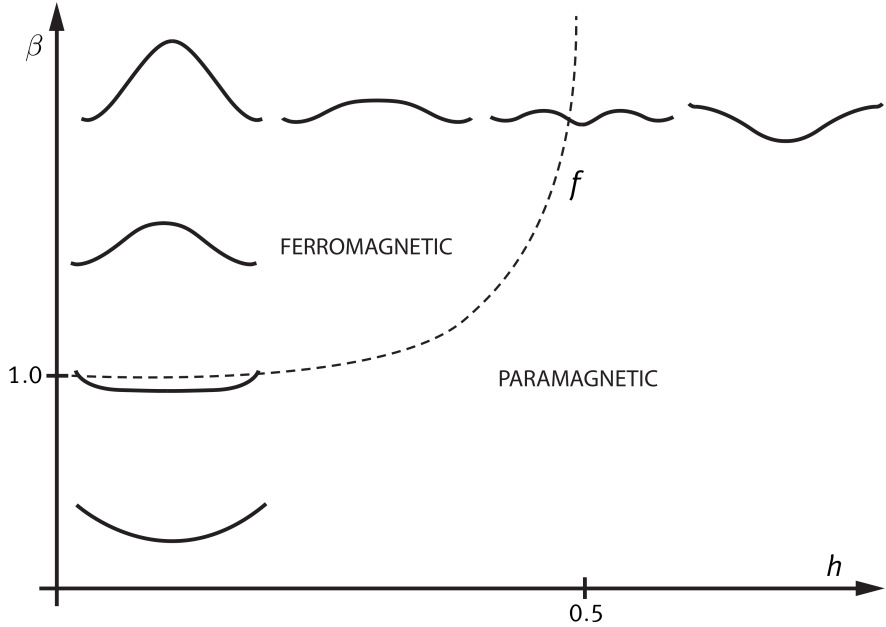}
\caption{\label{fig1} $I$ displays phase transitions as a function of $\beta$ and $h$.}
\end{center}
\end{figure}
By Remark \ref{remark:minima}, in order to obtain the phase diagram of the Curie-Weiss model with dichotomous external fields, it is enough to study $G$, which has been done in Chapter 5 of \cite{Amar1992}. We note a rich family of phase transitions in $\beta$ as well as in $h$: For $h \geq \frac12$ $G$ has a unique global minimum at $x=0$. In contrast to that the case $h < \frac12$ is more subtle. One finds a strictly increasing function $f: [0,\frac12) \rightarrow \R$ with $f(0) = 1$ and $f(x_n) \rightarrow \infty$ as $x_n \nearrow \frac12$ such that
\begin{itemize}
\item[(i)] $\beta < f(h)$:\\
$G$ has an unique global minimum at $x=0$.
\item[(ii)] $\beta = f(h)$:\\
$G$ has an unique global minimum at $x=0$ if $h \leq \frac23 \arcosh \sqrt{3/2}$. Otherwise it has three global minima, one at $x=0$ and the others symmetric.
\item[(iii)] $\beta > f(h)$:\\
$G$ has two symmetric global minima.
\end{itemize}
Consequently, we identify the following regions in the phase diagram:
\begin{itemize}
\item[(i)] If $h \geq \frac12$ or $h < \frac12, \beta < f(h)$, the system is in the \emph{paramagnetic phase}.
\item[(ii)] If $h < \frac12, \beta > f(h)$, the system is the \emph{ferromagnetic phase}.
\item[(iii)] The remaining region $h < \frac12, \beta = f(h)$ is again delicate as that critical line consists of three different regimes. If $h < h_c = \frac23 \arcosh \sqrt{3/2}$ we note a \emph{second-order phase transition}. At $h= h_c$ we note a \emph{tricritical point} and for $h > h_c = \frac23 \arcosh \sqrt{3/2}$ we observe a \emph{first-order phase transition}.
\end{itemize}
The same analysis could have been done in the setting of non-symmetric dichotomous external fields, that is, $\h_1$ takes the values $h$ and $-h$ with probabilities $\alpha$ and $1-\alpha$ respectively, $\alpha \in [0,1]$. In this case, we get
\begin{equation*}
G(x) ~=~ \frac{\beta}{2} x^2 - \alpha \ln \cosh (\beta (x + h))  -(1-\alpha) \ln \cosh (\beta (x - h)),
\end{equation*}
obtaining the same phase diagram as the one presented by Theorem 4.1 in \cite{Kuel2007}.
\end{subsection}

\begin{subsection}{Curie-Weiss model with uniformly distributed external fields}\label{lastexample}
In this last example, we want to study the phase diagram of the Curie-Weiss model with uniformly distributed external fields, that is, let $(\h_i)_{i \in \N}$ be an i.\,i.\,d.\ sequence, where $\h_1$ is uniformly distributed on the interval $[-h,h]$ for some $h> 0$. Again, by Remark \ref{remark:minima}, we need to study $G$, which in this setting is given by
\begin{equation}\label{Guniform}
G(x) ~=~ \ln 2 +\frac{\beta}{2} x^2- \frac{\dilog(-\exp(-2\beta(h+|x|)))}{4\,\beta \,h} + L(x)
\end{equation}
with
\begin{equation*}
L(x) ~=~
\begin{cases}
-\frac{\beta}{2 \,h}(x^2+h^2)-\frac{\pi^2}{24\, \beta\, h} - \frac{\dilog(-\exp(-2\beta(h-|x|)))}{4\,\beta\, h}& \text{ if } |x|\leq h\\
-\beta |x| + \frac{\dilog(-\exp(-2\beta(|x|-h)))}{4\,\beta \,h}& \text{ if } |x|> h.
\end{cases}
\end{equation*}
Therein, $\dilog$ denotes the dilogarithm that is defined by the power series
\begin{equation*}
\dilog(z) ~=~ \sum_{n=1}^{\infty}\frac{z^n}{n^2}.
\end{equation*}

\begin{figure}[h!]
\begin{center}
\includegraphics[width=12cm]{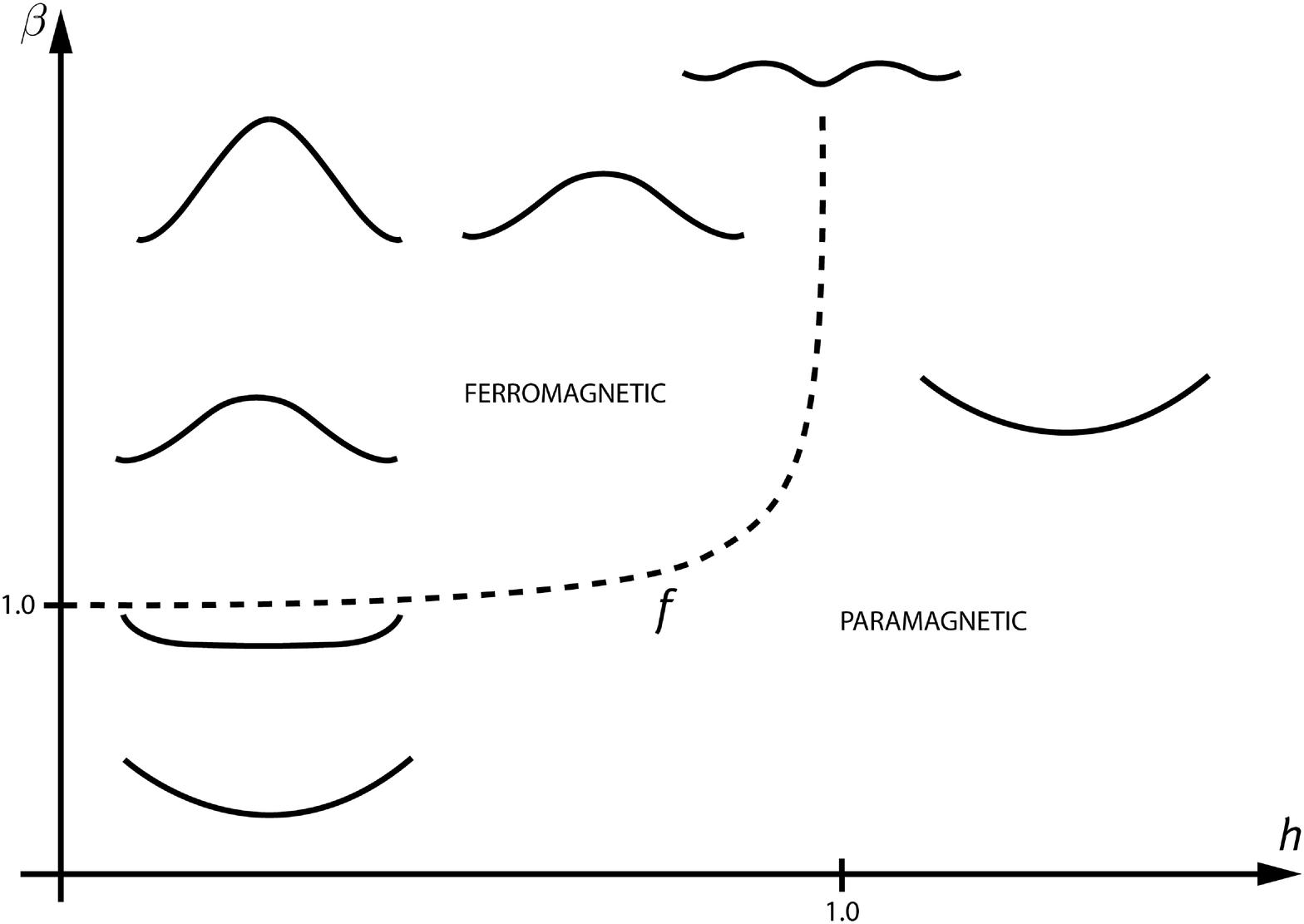}
\caption{\label{fig2} $G$ displays phase transitions as a function of $\beta$ and $h$.}
\end{center}
\end{figure}

\noindent
Very similar to what happened in the previous example, we now see (cf.\ Figure \ref{fig2}) that there exists an increasing function $f:[0,1) \to \mathbb{R}$ with $f(0)=1$ and $f(x_n) \nearrow \infty$ as $x_n \nearrow 1$ such that
\begin{itemize}
\item[(i)] $\beta < f(h)$:\\
$G$ has an unique global minimum at $x=0$ of type 1 and, thus, the system is in the paramagnetic phase.
\item[(ii)] $\beta > f(h)$:\\
$G$ has two symmetric global minima, both of type 1, and, thus,  the system is in the ferromagnetic phase.
\item[(iii)] $\beta = f(h)$:\\
On the critical line, the system shows a different behavior than the one displayed in the previous example. In particular, we do not observe a first-order phase transition.
\end{itemize}
\end{subsection}


\vspace{24pt}
\paragraph{Acknowledgements.} The authors would like to thank Felix Lucka for an interesting discussion about Example \ref{lastexample}.

\vspace{12pt}

\bibliographystyle{alpha}			
{\footnotesize \bibliography{LDP}}

\begin{thebibliography}{AdMPZ92}

\bibitem[AdMP91]{Amar1991}
J.~M.~G. Amaro~de Matos and J.~Fernando Perez.
\newblock Fluctuations in the {C}urie-{W}eiss version of the random field
  {I}sing model.
\newblock {\em J. Statist. Phys.}, 62(3-4):587--608, 1991.

\bibitem[AdMPZ92]{Amar1992}
J.~M.~G. Amaro~de Matos, A.~E. Patrick, and V.~A. Zagrebnov.
\newblock Random infinite-volume {G}ibbs states for the {C}urie-{W}eiss random
  field {I}sing model.
\newblock {\em J. Statist. Phys.}, 66(1-2):139--164, 1992.

\bibitem[Aha78]{Ahar1978}
Amnon Aharony.
\newblock Tricritical points in systems with random fields.
\newblock {\em Phys. Rev. B}, 18(7):3318--3327, 1978.

\bibitem[BBI09]{Bovi2009}
Alessandra Bianchi, Anton Bovier, and Dmitry Ioffe.
\newblock Sharp asymptotics for metastability in the random field
  {C}urie-{W}eiss model.
\newblock {\em Electron. J. Probab.}, 14:no. 53, 1541--1603, 2009.

\bibitem[CET05]{Cost2005}
Marius Costeniuc, Richard~S. Ellis, and Hugo Touchette.
\newblock Complete analysis of phase transitions and ensemble equivalence for
  the {C}urie-{W}eiss-{P}otts model.
\newblock {\em J. Math. Phys.}, 46(6):063301, 25, 2005.

\bibitem[Com89]{Come1989}
F.~Comets.
\newblock Large deviation estimates for a conditional probability distribution.
  {A}pplications to random interaction {G}ibbs measures.
\newblock {\em Probab. Theory Related Fields}, 80(3):407--432, 1989.

\bibitem[dH00]{DenH}
Frank den Hollander.
\newblock {\em Large deviations}, volume~14 of {\em Fields Institute
  Monographs}.
\newblock American Mathematical Society, Providence, RI, 2000.

\bibitem[Dur10]{Dur2010}
Rick Durrett.
\newblock {\em Probability: theory and examples}.
\newblock Cambridge Series in Statistical and Probabilistic Mathematics.
  Cambridge University Press, Cambridge, fourth edition, 2010.

\bibitem[DZ98]{Demb1998}
Amir Dembo and Ofer Zeitouni.
\newblock {\em Large deviations techniques and applications}, volume~38 of {\em
  Applications of Mathematics (New York)}.
\newblock Springer-Verlag, New York, second edition, 1998.

\bibitem[EL04]{Eich2004}
P.~Eichelsbacher and M.~L{\"o}we.
\newblock Moderate deviations for a class of mean-field models.
\newblock {\em Markov Process. Related Fields}, 10(2):345--366, 2004.

\bibitem[Ell85]{Elli1985}
Richard~S. Ellis.
\newblock {\em Entropy, large deviations, and statistical mechanics}, volume
  271 of {\em Grundlehren der Mathematischen Wissenschaften [Fundamental
  Principles of Mathematical Sciences]}.
\newblock Springer-Verlag, New York, 1985.

\bibitem[EN78a]{Elli1978b}
Richard~S. Ellis and Charles~M. Newman.
\newblock Limit theorems for sums of dependent random variables occurring in
  statistical mechanics.
\newblock {\em Z. Wahrsch. Verw. Gebiete}, 44(2):117--139, 1978.

\bibitem[EN78b]{Elli1978a}
Richard~S. Ellis and Charles~M. Newman.
\newblock The statistics of {C}urie-{W}eiss models.
\newblock {\em J. Statist. Phys.}, 19(2):149--161, 1978.

\bibitem[ENR80]{Elli1980}
Richard~S. Ellis, Charles~M. Newman, and Jay~S. Rosen.
\newblock Limit theorems for sums of dependent random variables occurring in
  statistical mechanics. {II}. {C}onditioning, multiple phases, and
  metastability.
\newblock {\em Z. Wahrsch. Verw. Gebiete}, 51(2):153--169, 1980.

\bibitem[FMP00]{Font2000}
Luiz~Renato Fontes, Pierre Mathieu, and Pierre Picco.
\newblock On the averaged dynamics of the random field {C}urie-{W}eiss model.
\newblock {\em Ann. Appl. Probab.}, 10(4):1212--1245, 2000.

\bibitem[IK10]{Iaco2010}
Giulio Iacobelli and Christof K{\"u}lske.
\newblock Metastates in finite-type mean-field models: visibility,
  invisibility, and random restoration of symmetry.
\newblock {\em J. Stat. Phys.}, 140(1):27--55, 2010.

\bibitem[KLN07]{Kuel2007}
Christof K{\"u}lske and Arnaud Le~Ny.
\newblock Spin-flip dynamics of the {C}urie-{W}eiss model: loss of
  {G}ibbsianness with possibly broken symmetry.
\newblock {\em Comm. Math. Phys.}, 271(2):431--454, 2007.

\bibitem[K{\"u}l97]{Kuel1997}
Christof K{\"u}lske.
\newblock Metastates in disordered mean-field models: random field and
  {H}opfield models.
\newblock {\em J. Statist. Phys.}, 88(5-6):1257--1293, 1997.

\bibitem[LM12]{Loe2012}
Matthias L{\"o}we and Raphael Meiners.
\newblock Moderate deviations for random field {C}urie-{W}eiss models.
\newblock {\em J. Statist. Phys.}, 149(4):701--721, 2012.

\bibitem[SP77]{Schn1977}
T.~Schneider and Erling Pytte.
\newblock Random-field instability of the ferromagnetic state.
\newblock {\em Phys. Rev. B}, 15(3):1519--1522, 1977.

\bibitem[SW85]{Sali1985}
S.~R. Salinas and W.~F. Wreszinski.
\newblock On the mean-field {I}sing model in a random external field.
\newblock {\em J. Statist. Phys.}, 41(1-2):299--313, 1985.

\end{thebibliography}
\label{chap:bib}
\end{document}